\pgfplotsset{compat=1.7}
\def\ball{{I\kern -.35em B}}
\def\tto{\rightrightarrows}
\def\bx{\bar x}
\def\by{\bar y}
\def\bz{\bar z}
\def\nn{neighborhood\,}
\def\nns{neighborhoods\,}
\def\dom{\mathop{\rm dom}\nolimits}
\def\gph{\mathop{\rm gph}\nolimits}
\def\dist{\mathop{\rm dist}\nolimits}
\newtheorem{proof}{Proof}
\newtheorem{definition}{Definition}[section]
\newtheorem{proposition}{Proposition}[section]
\newtheorem{theorem}{Theorem}[section]
\newtheorem{remark}{Remark}[section]
\newtheorem{example1}{Example}[section]
\newdimen\LineSpace
\tikzset{
	line space/.code={\LineSpace=#1},
	line space=5pt
}
\begin{document}

\title{A note on the uniformity of strong subregularity around the reference point}
\author{Tomáš Roubal\thanks{Institute of Information Theory and Automation, Czech Academy of Sciences, Prague, Czech Republic, 	 roubal@utia.cas.cz, ORCID ID: 0000-0002-6137-1046}}
	   % Brani�sovsk?a 31, 37005 Cesk? � e Bud�ejovice

\date{}
\maketitle

\begin{abstract}
This paper investigates strong metric subregularity around a reference point as introduced by H. Gfrerer and J. V. Outrata in \cite{GO2022}. In the setting of Banach spaces, we analyse its stability under Lipschitz continuous perturbations and establish its uniformity over compact sets. Our results ensure that the property is preserved under small Lipschitz perturbations, which is crucial for maintaining robustness in variational analysis. Furthermore, we apply the developed theory to parametric inclusion problems. The analysis demonstrates that the uniformity of strong metric subregularity provides a  theoretical foundation for addressing stability issues in parametrized optimization and control applications.

\end{abstract}
\noindent\textbf{Keywords.} Strong metric subregularity, Lipschitz continuity, uniformity, sum stability

\noindent\textbf{AMS subject classifications.} 49J53, 49J52, 90C33

\section{Introduction}

Stability and robustness of solutions in optimization and control are central issues in variational analysis, where understanding local solution behavior and its sensitivity to perturbations is essential. Recently, inspired by the Newton method, H.  Gfrerer and  J. V. Outrata \cite{GO2021} introduced the semismooth$^*$ method, bringing renewed attention to the concept of strong metric subregularity, e.g., \cite{CDK2018}.
The property was extended by the same authors to strong metric subregularity around the reference, which enables the analysis of solution stability and the convergence the method, see \cite{BM2021}.

The aim of this study is to demonstrate that strong metric subregularity is preserved and even becomes uniform using Lipschitz continuous perturbations. Building upon these foundations, we prove the existence of uniform constants and domains that ensure the subregularity property holds uniformly across compact sets. Such uniformity plays a crucial role in the robustness of numerical methods and path-following techniques for solving parametric inclusion problems, e.g., \cite{DCH2013, CR2018, DKRV2013}.

In the sections that follow, we first establish the basic notation and necessary definitions within the framework of Banach spaces. We then present the main theoretical results, demonstrating that local strong metric subregularity can be extended to uniform subregularity on compact sets. This extension not only extends the theoretical foundations of variational analysis but also opens up new avenues for the construction and analysis of efficient algorithms in optimization, control, and economic modelling.

 \section{Preliminaries}
	Throughout the whole paper, we assume that $X$ and $Y$ are Banach spaces and  $P$ is a metric space. The closed ball and open ball of radius $\delta$ centred at a point $x \in X$ are defined respectively as
 $$
\ball_X[x, \delta] := \{u \in X : \Vert x- u\Vert \leq \delta\}
\quad \text{and}\quad
 \ball_X(x, \delta) = \{u \in X : \Vert x- u\Vert < \delta\}.
 $$

 The distance from a point $x \in X$ to a set ${A}$ is denoted by $\dist(x, {A})$ and is defined as the shortest distance between $x$ and any point in ${A}$, expressed as $\dist(x, {A}) = \inf_{u \in {A}}\Vert u- x\Vert.$

 The graph of a set-valued mapping $F$, represented as $\gph\, F$, comprises all pairs $(x, y)$ such that $y \in F(x)$. Additionally, the domain of $F$, denoted by $\dom\, F$, includes all points $x$ for which the set $F(x)$ is nonempty, indicating the extent of the definition of $F$. The inverse of a set-valued mapping $F$, denoted by $F^{-1}$, is defined such that $y \in F(x)$ implies $x \in F^{-1}(y)$. This is expressed as $ F^{-1}(y) = \{x \in X \mid y \in F(x)\}.$

In modern variational analysis, examining the regularity of set-valued mappings is essential for interpreting various mathematical models, especially in fields such as optimization, control theory, and economics. The regularity of these mappings refers to the characteristics that determine the local behaviour of the mapping around a point in its domain. Here, we focus solely on properties that are relevant to our research.
\begin{definition}\label{defRegularity}
	Let $F : X \rightrightarrows Y$ be a set-valued mapping and let $(\bar{x}, \bar{y}) \in \mathrm{gph}\, F$
	be a given point. We say that $F$ is:
	\begin{enumerate}
		\item[\rm (i)]  \textit{metrically subregular} at $(\bar{x}, \bar{y})$ if there exists $\kappa \geq 0$ along with some \nn $U$ of $\bar{x}$ such that
		\begin{eqnarray*}
			\label{eqSubreg}
			\mathrm{dist}(x, F^{-1}(\bar{y})) \leq \kappa\, \mathrm{dist}(\bar{y}, F(x)) \quad  \text{for each}\quad x \in U;
		\end{eqnarray*}
		\item[\rm (ii)] \textit{strongly metrically subregular} at $(\bar{x}, \bar{y})$ if there exists $\kappa\geq0$ and a \nn $U$ of $\bar{x}$ such that
		\begin{eqnarray*}
		\Vert x-\bx\Vert \leq \kappa \dist(\by, F(x))\quad\text{for each} \quad x\in U;
		\end{eqnarray*}
		\item[\rm (iii)] (strongly) \textit{metrically subregular around} $(\bar{x}, \bar{y}) \in \mathrm{gph}\, F$ if there is a \nn $W$ of $(\bar{x}, \bar{y})$ and $\kappa\geq0$ such that  at each $(x, y) \in \mathrm{gph}\, F \cap W$ there is a \nn  $U$ of $x$ such that
		\begin{eqnarray*}
			\Vert  u-x\Vert\leq \kappa \dist(y, F(u))\quad\text{for each}\quad u\in U.
	\end{eqnarray*}
	\end{enumerate}
\end{definition}
Strong metric subregularity around the reference point was first introduced in \cite{GO2022}; also see \cite{DR2014, Ioffe2017} for the additional properties.
\begin{definition}	Let $F: X \rightrightarrows Y$ be a mapping and let $(\bar{x}, \bar{y}) \in \gph\,F$. We say that
	\begin{enumerate}[(i)]
		\item $F$ is \emph{calm} at $(\bar{x}, \bar{y})$ if there exist $\mu \geq 0$ and a \nn $U\times$ of $(\bar{x},\by)$ such that
		\[
		F(x)\cap V \subset F(\bx) + \mu \Vert x-\bar{x}\Vert\ball_{Y}
		\quad\text{for all } x \in U;
		\]
		\item $F$ is \emph{isolated calm} at $(\bar{x}, \bar{y})$ if there exist $\mu\ge 0$ and a \nn $U\times v$ of $(\bar{x},\by)$ such that
		\[
		F(x)\cap\, V\subset \{\bar{y}\} + \mu \Vert x-\bar{x}\Vert \ball_{Y}
		\quad\text{for all}\quad x \in U.
		\]
	\end{enumerate}
\end{definition}
\begin{remark}
Note that a single-valued mapping $f:X\longrightarrow Y$ is (isolated) calm at $\bx$ if there exist $\mu\ge0$ and a \nn $U$ of $\bx$ such that
	\begin{eqnarray*}
		\Vert f(x)-f(\bx)\Vert\leq \mu \Vert x-\bx \Vert\quad\text{for each}\quad x\in U.
	\end{eqnarray*}
\end{remark}
It is known that strong metric subregularity is stable with calm single-valued  perturbation \cite{CDK2018}. 
\begin{theorem}
	Let $\alpha, \kappa, \mu$ be positive constants such that $\kappa\mu < 1$.
	Consider a mapping $F : X \rightrightarrows Y$ which is strongly subregular at $(\bar{x}, \bar{y})$ 
	with the constant $\kappa$ and a \nn $\ball_X[\bar{x},\alpha]$, and a function 
	$g : X \longrightarrow Y$ which is calm at $\bar{x}$ with constant $\mu$ and a \nn $\ball_X [\bar{x},\beta]$.
	Then $g + F$ is strongly metrically subregular at $(\bar{x}, \bar{y} + g(\bar{x}))$ 
	with the constant $\kappa/(1-\kappa \mu)$ and the \nn $\ball_X [\bar{x}, \alpha]$.
\end{theorem}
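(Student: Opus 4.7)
The plan is to unwind the definitions and run the usual perturbation calculation, where the translation identity $\dist(\bar y + g(\bar x),(g+F)(x)) = \dist(\bar y + g(\bar x) - g(x), F(x))$ lets us compare the perturbed distance with $\dist(\bar y, F(x))$ via the $1$-Lipschitzness of $\dist(\cdot, F(x))$. Pick an arbitrary $x \in \ball_X[\bar x, \alpha] \cap \ball_X[\bar x, \beta]$ (so that both hypotheses apply; the stated neighborhood $\ball_X[\bar x, \alpha]$ must be read with the implicit understanding that $\beta \ge \alpha$, otherwise one replaces $\alpha$ by $\min\{\alpha,\beta\}$).

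The first step is to write
\[
\dist(\bar y, F(x)) \le \dist\bigl(\bar y + g(\bar x) - g(x), F(x)\bigr) + \|g(x) - g(\bar x)\|,
\]
which is just the triangle inequality for the distance to the set $F(x)$. The right-hand distance equals $\dist(\bar y + g(\bar x), g(x) + F(x)) = \dist(\bar y + g(\bar x), (g+F)(x))$, so this step translates everything into $(g+F)$-quantities at the cost of a single $\|g(x)-g(\bar x)\|$ term.

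The second step invokes the two hypotheses: strong subregularity of $F$ gives $\|x-\bar x\| \le \kappa\, \dist(\bar y, F(x))$, and calmness of $g$ gives $\|g(x)-g(\bar x)\| \le \mu \|x-\bar x\|$. Combining these with the inequality from the previous step yields
\[
\|x - \bar x\| \le \kappa\, \dist\bigl(\bar y + g(\bar x), (g+F)(x)\bigr) + \kappa \mu \|x - \bar x\|.
\]
Since $\kappa \mu < 1$, one rearranges to obtain the desired estimate with constant $\kappa/(1-\kappa\mu)$, valid for every $x$ in the common neighborhood.

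There is no real obstacle here; the calculation is a two-line application of the triangle inequality plus the two hypotheses. The only delicate point worth flagging is the neighborhood: the proof genuinely requires $x$ to lie in both $\ball_X[\bar x,\alpha]$ (for $F$) and $\ball_X[\bar x,\beta]$ (for $g$), so strictly speaking the conclusion holds on $\ball_X[\bar x, \min\{\alpha,\beta\}]$, and the statement of the theorem with neighborhood $\ball_X[\bar x,\alpha]$ should be understood under the tacit assumption that the calmness neighborhood for $g$ is at least as large.
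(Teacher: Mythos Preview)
Your argument is correct and is precisely the standard one. The paper does not actually supply a proof of this theorem---it is quoted as a known result from \cite{CDK2018}---but the paper's own proof of the closely related Proposition~\ref{propStability} (the set-valued extension) follows exactly the same route: apply strong subregularity of $F$, use the translation/triangle inequality to pass from $\dist(\bar y,F(x))$ to $\dist(\bar y+\bar z,z+F(x))$ plus a calmness term, and absorb the $\kappa\mu\|x-\bar x\|$ on the left. Your remark about the neighborhood is also on point: the argument genuinely needs $x\in\ball_X[\bar x,\alpha]\cap\ball_X[\bar x,\beta]$, so the conclusion as stated tacitly presumes $\beta\ge\alpha$ (or should read $\ball_X[\bar x,\min\{\alpha,\beta\}]$).
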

It should be noted that metric subregularity does not necessarily hold when a calm single-valued perturbation is applied. The example below illustrates this with a counterexample.
\begin{example1}
 Define two functions $f$ and $g$ such that 
	\[
	f(x)=
	\begin{cases}
		0, & x\le 0,\\[1mm]
		x, & x>0,
	\end{cases}
	\quad\text{and}\quad
		g(x)=
	\begin{cases}
		x^2, & x\le 0,\\[1mm]
		-x^2, & x>0,
	\end{cases}
	\]
	where $f$ is metrically subregular at \(0\)  and $g$ is calm at $0$.

	Since \(|g(x)|\le |x|\) near \(0\), \(g\) is (isolated) calm. Now, consider the function \(h:=f+g\):
	\[
	h(x)=
	\begin{cases}
		x^2, & x\le 0,\\[1mm]
		x-x^2, & x>0.
	\end{cases}
	\]
	Metric subregularity of \(h\) at \(0\) would require the existence of \(\kappa>0\) such that
	\[
	|x|\le \kappa\,|h(x)|
	\]
	in a \nn of \(0\). For \(x<0\), we have \(h(x)=x^2\), so
	\[
	|x|\le \kappa|x^2|\quad\Rightarrow\quad \frac{1}{|x|}\le \kappa.
	\]
	This inequality fails as the left-hand side goes to infinity when \(x\downarrow 0\).
\end{example1}
Using ideas from \cite{CDK2018}, the following proposition establishes the stability of strong metric subregularity at the reference point under set-valued perturbations.
\begin{proposition}
	\label{propStability}
	 Consider a set-valued mappings $F:X\tto Y$ and a point $(\bx, \by)\in \gph\,F$. Assume that there are $\kappa>0$ and  $\alpha>0$ such that $F$ is strongly metrically subregular at $(\bx,\by)$ with the constant $\kappa$ and  the \nn $\ball_{X}[\bx,\alpha]$. Let $\mu>0$ be such that $\kappa \mu <1$ and let $\kappa'>\kappa/(1-\kappa \mu)$.
	
	Then for each $\beta\in (0,\alpha]$	and for each set-valued mapping $G:X\tto Y$ satisfying
	\begin{eqnarray}
		\label{eqStrongerCalmness}
		G(\bx)=\lbrace \bz \rbrace\quad \text{and}\quad	G(x)\subset \lbrace\bz\rbrace+\mu  \Vert x-\bx \Vert\ball_{Y}\quad\text{for each}\quad x \in \ball_X[\bx,\beta],
	\end{eqnarray}
	we have that the mapping $G+F$ is strongly metrically subregular at $(\bx,\by+\bz)$ with the constant $\kappa'$ and  the \nn $\ball_{X}[\bx,\beta]$. 
\end{proposition}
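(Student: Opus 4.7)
The plan is to verify directly the defining inequality for strong metric subregularity of $G+F$ on $\ball_X[\bx,\beta]$ by bounding $\dist(\by,F(x))$ in terms of $\dist(\by+\bz,(G+F)(x))$ through the triangle inequality, and then invoking the hypothesised subregularity of $F$. This mirrors the standard perturbation pattern recalled in \cite{CDK2018}.

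Concretely, I would fix $x\in\ball_X[\bx,\beta]\subset\ball_X[\bx,\alpha]$ and, for an arbitrary $w\in (G+F)(x)$, write $w=g+f$ with $g\in G(x)$ and $f\in F(x)$. Assumption \eqref{eqStrongerCalmness} gives $\|g-\bz\|\le\mu\|x-\bx\|$, so the triangle inequality yields
$$
\|f-\by\|\le\|w-(\by+\bz)\|+\|\bz-g\|\le\|w-(\by+\bz)\|+\mu\|x-\bx\|.
$$
Since $f\in F(x)$, the left-hand side is an upper bound on $\dist(\by,F(x))$. Substituting into the subregularity estimate $\|x-\bx\|\le\kappa\dist(\by,F(x))$ and using $\kappa\mu<1$ to absorb the resulting $\kappa\mu\|x-\bx\|$ term on the right produces
$$
\|x-\bx\|\le\frac{\kappa}{1-\kappa\mu}\|w-(\by+\bz)\|.
$$
Taking the infimum over $w\in(G+F)(x)$ and invoking $\kappa'>\kappa/(1-\kappa\mu)$ delivers the claim.

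There is no genuine obstacle; the argument is the triangle inequality together with a geometric-series-style absorption. Two minor points deserve a sentence. First, if $(G+F)(x)=\emptyset$ then $\dist(\by+\bz,(G+F)(x))=+\infty$ by convention and the inequality is automatic, so the decomposition step is legitimate whenever it is needed. Second, $G$ need not be single-valued: condition \eqref{eqStrongerCalmness} controls \emph{every} selection $g\in G(x)$ by $\mu\|x-\bx\|$, so the bound on $\|g-\bz\|$ is valid for whichever decomposition $w=g+f$ we pick, and the subsequent infimum over $w$ is not affected by non-uniqueness.
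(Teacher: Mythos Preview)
Your proof is correct and follows essentially the same route as the paper's: both fix $x\in\ball_X[\bx,\beta]$, use \eqref{eqStrongerCalmness} to bound the deviation of an element of $G(x)$ from $\bz$ by $\mu\|x-\bx\|$, apply the triangle inequality to pass from $\dist(\by,F(x))$ to $\dist(\by+\bz,(G+F)(x))$, and absorb the $\kappa\mu\|x-\bx\|$ term. The only cosmetic difference is that the paper fixes $z\in G(x)$ and works with $\dist(\by+\bz,z+F(x))$ before taking the infimum over $z$, whereas you fix $w=g+f\in(G+F)(x)$ directly; the resulting estimates are identical.
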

\begin{proof}
	Fix any $\mu>0$, $\beta>0$, and any mapping $G$ as in the conclusion.  Fix any  $x\in \ball_X[\bx, \beta]$.
	
	If $(G(x)+F(x)) =\emptyset$, we are done.  If not, fix any $z\in G(x)$, then
	$$
	\Vert z-\bz\Vert \leq \mu \Vert x-\bx\Vert.
	$$
	Then
	\begin{eqnarray*}
		\Vert x-\bx\Vert&\leq& \kappa \dist(\by,F(x))\leq \kappa \dist(\by+\bz,z+F(x))+\kappa\Vert z-\bz\Vert\\
		& \leq &\kappa \dist(\by+\bz,z+F(x))+\kappa\mu \Vert x-\bx\Vert.
	\end{eqnarray*}
	Taking into account that $\kappa\mu<1$ and $\tfrac{\kappa}{1-\kappa\mu}<\kappa'$ and that $z$ is fixed arbitrary, we obtain
	\begin{eqnarray*}
		\Vert x-\bx\Vert&\leq&\tfrac{\kappa}{1-\kappa \mu} \dist(\by+\bz,G(x)+F(x))\leq\kappa' \dist(\by+\bz,G(x)+F(x)).
	\end{eqnarray*}
	
\end{proof}
Note that the property in \eqref{eqStrongerCalmness} is stronger than isolated calmness at $(\bx,\bz)$. Moreover, since $G(\bx)$ is a singleton and $G$ is upper semicontinuous at $\bx$, it follows that $F$ and $G$ are sum-stable in the sense of \cite{NTT2014}.

\section{Uniformity of strong subregularity}

We are  investigating uniformity of strong metric subregularity around the reference point on compact subsets of Banach spaces of mappings which are defined as a sum of a single-valued (possibly nonsmooth) mapping and a set-valued mapping. We are following ideas of the proofs from \cite[Section 2]{CPR2019}.

First, we present a statement concerning perturbed strong metric subregularity on a set.

\begin{theorem}
	\label{thmStabilitySubregularity}
	 Consider a set-valued mappings $F:X\tto Y$ and a point $(\bx, \by)\in \gph\,F$. Assume that there are positive constants $a,b,$ and $\kappa$ such that  for each $(x,y)\in \big(\ball_X[\bx, a]\times \ball_Y[\by, b])\cap \gph\,F$ there is $r>0$ such that for each $u\in \ball_X[x, r]$ we have
	\begin{eqnarray}
		\label{eqConstrainedSubreg}
		\Vert u-x\Vert \leq \kappa \dist(y,F(u)\cap \ball_Y[\by, b]).
	\end{eqnarray}
	Let $\mu>0$ be such that $\kappa \mu <1$ and let $\kappa'>\kappa/(1-\kappa \mu)$. Then for every positive $\alpha$ and $\beta$ such that
$		2\alpha\leq a$ and $	2\beta+\mu \alpha \leq b$	and for every mapping $g:X\longrightarrow Y$ satisfying
	\begin{eqnarray*}
	\Vert g(\bx)\Vert\leq\beta\quad\text{and}\quad	\Vert g(x)-g(u)\Vert \leq \mu\Vert x-u\Vert\quad\text{for each}\quad x,u \in \ball_X[\bx,\alpha],
	\end{eqnarray*}
	 we have that for each $y\in \ball_Y[\by,\beta]$ and each $x\in (g+F)^{-1}(y)\cap \ball_X[\bx, \alpha]$  there is $r\in (0,\alpha]$  such that each $u\in\ball_X[x,r]$ and   each   $ v\in  (g+F)(u)\cap \ball_Y[\by, \beta]$   we have
	 \begin{eqnarray*}
	 	\Vert u-x\Vert\leq \kappa' \Vert y-v\Vert.
	 \end{eqnarray*}
	\end{theorem}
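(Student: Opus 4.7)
The plan is to reduce the perturbed statement to the assumed local subregularity of $F$ via the standard contraction/linearization trick, making careful use of the size relations $2\alpha\leq a$ and $2\beta+\mu\alpha\leq b$ to keep every point we generate inside the neighborhoods on which our assumptions apply.

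First I would fix an arbitrary $y\in\ball_Y[\by,\beta]$ and $x\in(g+F)^{-1}(y)\cap\ball_X[\bx,\alpha]$, and set $y':=y-g(x)$, so $y'\in F(x)$. I would then verify that the pair $(x,y')$ lies in the neighborhood $(\ball_X[\bx,a]\times\ball_Y[\by,b])\cap\gph\,F$ on which the hypothesis \eqref{eqConstrainedSubreg} applies: the bound $\|x-\bx\|\leq\alpha\leq a$ is immediate, while the Lipschitz estimate on $g$ gives $\|g(x)\|\leq\|g(\bx)\|+\mu\|x-\bx\|\leq\beta+\mu\alpha$, whence $\|y'-\by\|\leq\|y-\by\|+\|g(x)\|\leq 2\beta+\mu\alpha\leq b$. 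Hypothesis \eqref{eqConstrainedSubreg} then yields some $r_0>0$ such that $\|u-x\|\leq\kappa\,\dist(y',F(u)\cap\ball_Y[\by,b])$ for all $u\in\ball_X[x,r_0]$.

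Next I would shrink the radius to $r\in(0,\alpha]$ small enough that $\ball_X[x,r]\subset\ball_X[\bx,\alpha]$, which is the key place where one uses $\alpha\leq a/2$ so that the surrounding ball on which $F$ is controlled is strictly larger than the one on which $g$ is Lipschitz. Then, for any $u\in\ball_X[x,r]$ and any $v\in(g+F)(u)\cap\ball_Y[\by,\beta]$, the element $v':=v-g(u)$ belongs to $F(u)$, and by the same arithmetic as before $\|v'-\by\|\leq\|v-\by\|+\|g(u)\|\leq 2\beta+\mu\alpha\leq b$, so $v'\in F(u)\cap\ball_Y[\by,b]$. Consequently $\dist(y',F(u)\cap\ball_Y[\by,b])\leq\|y'-v'\|=\|y-v+g(u)-g(x)\|\leq\|y-v\|+\mu\|u-x\|$, and plugging this into \eqref{eqConstrainedSubreg} produces
\begin{equation*}
\|u-x\|\leq\kappa\|y-v\|+\kappa\mu\|u-x\|.
\end{equation*}
Since $\kappa\mu<1$, rearranging gives $\|u-x\|\leq\tfrac{\kappa}{1-\kappa\mu}\|y-v\|\leq\kappa'\|y-v\|$, which is the desired inequality.

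The main delicate point I anticipate is the choice of $r$ at step two: one must simultaneously respect the $r_0$ from the hypothesis on $F$ and keep $\ball_X[x,r]$ inside $\ball_X[\bx,\alpha]$, since otherwise the Lipschitz inequality for $g$ cannot be applied to the pair $(x,u)$ and the containment $v'\in\ball_Y[\by,b]$ cannot be verified. This forces $r\leq\alpha-\|x-\bx\|$ (with the boundary case $\|x-\bx\|=\alpha$ requiring either the convention that the Lipschitz property extends slightly beyond the stated ball, or a passage to the interior), and all the numerical conditions $2\alpha\leq a$ and $2\beta+\mu\alpha\leq b$ are precisely what make such a choice possible. The remainder of the argument is the clean contraction computation displayed above.
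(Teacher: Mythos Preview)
Your proof is correct and follows essentially the same route as the paper: verify that $(x,\,y-g(x))$ lies in the neighborhood where \eqref{eqConstrainedSubreg} applies, check that $v-g(u)\in F(u)\cap\ball_Y[\by,b]$ via the arithmetic $2\beta+\mu\alpha\leq b$, and finish with the contraction $\|u-x\|\leq\kappa\|y-v\|+\kappa\mu\|u-x\|$. The paper packages the same estimates as a single chain of distance inequalities rather than naming $y'$ and $v'$, and simply takes $r\in(0,\alpha]$ while checking $u\in\ball_X[\bx,2\alpha]\subset\ball_X[\bx,a]$; your more cautious shrinking $\ball_X[x,r]\subset\ball_X[\bx,\alpha]$ and your flagging of the boundary case $\|x-\bx\|=\alpha$ are, if anything, more careful than the paper's treatment.
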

	\begin{proof}
		Fix any $\alpha>0$ and $\beta>0$ and any mapping $g$ as in the conclusion. Then fix any $y\in \ball_Y[\by,\beta]$. Fix any $x\in (g+F)^{-1}(y)\cap \ball_X[\bx, \alpha]$ and find a corresponding $r\in (0, \alpha]$ such that \eqref{eqConstrainedSubreg}, for each $u\in \ball_X[x, r]$, holds.
Fix any $u\in \ball_X[x,r]$.
Then $u\in \ball_X[\bx, a]$ since
\begin{eqnarray*}
	\Vert u-\bx\Vert \leq\Vert u-x\Vert+\Vert x-\bx\Vert\leq r+{\alpha}\leq 2\alpha\leq a.
\end{eqnarray*}
Note that $ \ball_Y[\by,\beta]\subset \ball_Y[\by+g(x), b]$, since
for each $v \in  \ball_Y[\by,\beta]$ we have
\begin{eqnarray*}
	\Vert\by+ g(x)- v\Vert\leq 	\Vert\by-  v\Vert +\Vert g(x)-g(\bx)\Vert +\Vert g(\bx)\Vert\leq 	\beta+\mu r +\beta \leq 	2\beta+\mu \alpha  \leq b;
\end{eqnarray*}
so is $y-g(x)\in \ball_{Y}[\by, b]$. If $(g(u)+F(u))\cap \ball_Y[\by, \beta] =\emptyset$, we are done. If not, fix any $v\in (g(u)+F(u))\cap \ball_Y[\by, \beta]$. Then
\begin{eqnarray*}
	\Vert u-x\Vert&\leq& \kappa \dist(y-g(x),F(u)\cap \ball_Y[\by, b])\leq \kappa \dist(y-g(u),F(u)\cap \ball_Y[\by, b])+\kappa\Vert g(u)-g(x)\Vert\\
	&\leq &  \kappa \dist(y,(g(u)+F(u))\cap \ball_Y[\by+g(u), b])+ \kappa\mu \Vert u -x\Vert\\&\leq&  \kappa \dist(y,(g(u)+F(u))\cap \ball_Y[\by, \beta])+ \kappa\mu  \Vert u-x\Vert\\
	&\leq&  \kappa \Vert y -v \Vert+ \kappa\mu \Vert u-x\Vert.
\end{eqnarray*}
Taking into account that $\kappa\mu<1$ and $\tfrac{\kappa}{1-\kappa\mu}<\kappa'$, we obtain
\begin{eqnarray*}
	\Vert u-x\Vert\leq \tfrac{\kappa}{1-\kappa\mu} \Vert y -v\Vert\leq  \kappa' \Vert y-v\Vert.
\end{eqnarray*}
	\end{proof}

We will now demonstrate that subregularity around each point of a compact set implies uniform subregularity. In other words, it is possible to find the same constant and \nn for all points within this set.

	\begin{theorem}
		\label{thmParametricStabilitySubregularity}
		Let  $\Omega \subset P\times X$ be a compact set. Consider a set-valued mapping $F:X\tto Y$ and a continuous single-valued mapping $f: P\times X\longrightarrow Y$ such that for each $({t},\bx)\in \Omega$ we have:
		\begin{enumerate}
			\item[\rm (i)]  the mapping $X\ni x \longmapsto G_{{t}}(x):= f({t},x)+F(x)$  is strongly metrically subregular around $(\bx,0)$;
			\item[\rm (ii)]  for each $\mu>0$ there is $\alpha>0$ such that for each $x, u \in \ball_X[\bx,\alpha]$ and each $s \in\ball_P[{t},\alpha]$ we have
			\begin{eqnarray*}
				\Vert f(s,u)-f({t},u)-(f(s,x)-f({t},x))\Vert \leq \mu \Vert x-u\Vert.
			\end{eqnarray*}
		\end{enumerate}
		Then:
		\begin{enumerate}
					 \item[\rm (iii)] there are positive constants $\kappa$ and $a,  b$  such that for each $({t},\bx)\in \Omega$ the mapping $G_{{t}}$ is strongly metrically subregular around $(\bx,0)$ with the constant $\kappa$ and \nns\,$\ball_X[\bx, a]$ and $\ball_Y[0, b]$;
					 \item[\rm (iv)] there are $\kappa>0$ and $a>0$ such that for each $(t,x)\in \Omega$ the mapping $G_t$ is strongly metrically subregular at $(x,0)$ with the constant $\kappa'$ and the \nn $\ball_{X}[x,a]$.
		\end{enumerate}
	\end{theorem}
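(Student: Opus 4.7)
The plan is to prove (iii) by localising Theorem~\ref{thmStabilitySubregularity} at each point of $\Omega$ and patching via a finite subcover, and then to derive (iv) from (iii) by a compactness/contradiction argument built on the same perturbation bounds.

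For (iii), I fix an arbitrary $(\tau,\bar z)\in\Omega$. Assumption (i) furnishes subregularity-around constants $\kappa_0,a_0,b_0$ for $G_\tau$ at $(\bar z,0)$; choose $\mu_0<1/\kappa_0$ and $\kappa'_0>\kappa_0/(1-\kappa_0\mu_0)$. Assumption (ii) then provides $\alpha_0>0$ such that the single-valued map $g_s(\cdot):=f(s,\cdot)-f(\tau,\cdot)$ is $\mu_0$-Lipschitz on $\ball_X[\bar z,\alpha_0]$ for all $s\in\ball_P[\tau,\alpha_0]$, while continuity of $f$ yields $\delta_0>0$ making $\|g_s(\bar z)\|$ as small as required. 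After rescaling to enforce the constraints $2\alpha\le a_0$ and $2\beta+\mu_0\alpha\le b_0$ demanded by Theorem~\ref{thmStabilitySubregularity}, applying that theorem with $F=G_\tau$ and perturbation $g_s$ transfers the subregularity-around property to $G_s=g_s+G_\tau$ with uniform constants $\kappa'_0,\alpha,\beta$ throughout a neighborhood of $\tau$ in $P$. Since subregularity around $(\bar z,0)$ localises to a corresponding property at each nearby reference $(\bx,0)\in\gph G_s$ with $\bx\in\ball_X[\bar z,\alpha/2]$ (by taking the outer window to be $\ball_X[\bx,\alpha/2]\times\ball_Y[0,\beta]$), compactness of $\Omega$ permits a finite cover by products $\ball_P[\tau_i,\delta_i]\times\ball_X[\bar z_i,\alpha_i/2]$, and the worst constants across the cover yield the uniform $\kappa,a,b$ of (iii).

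For (iv), I argue by contradiction. Fix $\kappa_*>\kappa/(1-\kappa\mu)$, where $\kappa$ is the uniform constant from (iii) and $\mu$ is the Lipschitz constant from (ii). If (iv) failed with this $\kappa_*$ for every $a>0$, then for each $n$ there would exist $(t_n,\bx_n)\in\Omega$ and $u_n\in\ball_X[\bx_n,1/n]$ with $\|u_n-\bx_n\|>\kappa_*\dist(0,G_{t_n}(u_n))$, and compactness yields $(t_n,\bx_n)\to(t^*,\bx^*)\in\Omega$ and $u_n\to\bx^*$. I pick $v_n\in G_{t_n}(u_n)$ with $\|v_n\|<\tfrac12(\|u_n-\bx_n\|/\kappa_*+\dist(0,G_{t_n}(u_n)))$ and transport to $\gph G_{t^*}$ by setting $\tilde v_n:=v_n+f(t^*,u_n)-f(t_n,u_n)\in G_{t^*}(u_n)$ and $w_n:=f(t^*,\bx_n)-f(t_n,\bx_n)\in G_{t^*}(\bx_n)$, both tending to $0$. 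Applying the subregularity-around property of $G_{t^*}$ from (iii) at the graph point $(\bx_n,w_n)$ with $u_n$ as test point, together with the bound $\|w_n-\tilde v_n\|\le\|v_n\|+\mu\|u_n-\bx_n\|$ coming from (ii), yields $(1-\kappa\mu)\|u_n-\bx_n\|\le\kappa\|v_n\|$; combining with $\|v_n\|<\|u_n-\bx_n\|/\kappa_*$ forces $\kappa_*<\kappa/(1-\kappa\mu)$, contradicting the choice of $\kappa_*$.

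The main obstacle is the technical step of ensuring that $u_n$ actually lies inside the local neighborhood of $\bx_n$ on which Definition~\ref{defRegularity}(iii) asserts the pointwise estimate at $(\bx_n,w_n)$: this inner neighborhood is a priori not uniform in $n$, while the uniformity established in (iii) controls only the outer window $\ball_X[\bx,a]\times\ball_Y[0,b]$. To sidestep circular reasoning, I re-apply Theorem~\ref{thmStabilitySubregularity} at the fixed reference $(\bx^*,0)$ for $G_{t^*}$ with perturbation $g_{t_n}$; its conclusion supplies a subregularity estimate valid on a ball of radius in $(0,\alpha]$ about each $\bx_n$, with $\alpha$ independent of $n$, and a further compactness step over the convergent graph points $(\bx_n,w_n)\to(\bx^*,0)$ is used to bound these radii below uniformly in $n$. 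Once $\|u_n-\bx_n\|<1/n$ eventually falls below this uniform lower bound, the estimate applies to $u_n$ and the contradiction closes.
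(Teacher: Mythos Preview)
For part (iii) your plan coincides with the paper's: localise via Theorem~\ref{thmStabilitySubregularity} at each $(\tau,\bar z)\in\Omega$, transfer the ``around'' property from $G_\tau$ to $G_s$ for nearby $s$ through the perturbation $g_s=f(s,\cdot)-f(\tau,\cdot)$, and patch by a finite subcover. One detail you skate over is that Theorem~\ref{thmStabilitySubregularity} only yields $\|u-x\|\le\kappa'\|y-v\|$ for $v\in G_s(u)\cap\ball_Y[0,\beta]$, whereas the definition of ``subregular around'' needs $\|u-x\|\le\kappa'\dist(y,G_s(u))$ without the intersection. The paper closes this by shrinking $r$ to at most $2\kappa'\beta/3$ and splitting on $\|v\|\le\beta$ versus $\|v\|>\beta$; in the latter case $\|y-v\|>2\beta/3\ge r/\kappa'$ and the inequality is automatic. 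This is easy to supply, so your treatment of (iii) is essentially the paper's.

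For part (iv) you depart from the paper. The paper does not argue by contradiction; it states that the same compactness scheme works with Proposition~\ref{propStability} in place of Theorem~\ref{thmStabilitySubregularity}, the point being that Proposition~\ref{propStability} outputs subregularity \emph{at} a reference point together with an explicit neighbourhood $\ball_X[\bar x,\beta]$ whose radius is controlled by the input data, not merely ``some'' radius. Your contradiction route runs into exactly the obstacle you flag, and your proposed fix does not close it. The radii $r_n$ you need at the graph points $(\bar x_n,w_n)$ of $G_{t^*}$ come from Definition~\ref{defRegularity}(iii), which promises only the \emph{existence} of some neighbourhood at each graph point; nothing prevents $r_n\to 0$ even though $(\bar x_n,w_n)\to(\bar x^*,0)$. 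Your ``further compactness step over the convergent graph points'' invokes no available continuity of $(x,y)\mapsto r_{(x,y)}$, and asserting a uniform lower bound on these radii is tantamount to the uniform ``at'' statement you are trying to prove. Re-applying Theorem~\ref{thmStabilitySubregularity} does not help either, since in its proof the radius $r$ is inherited from the assumed subregularity of $F$ at the shifted graph point and therefore carries the same non-uniformity. To follow the paper, replace the contradiction by a direct local argument built on Proposition~\ref{propStability}, where the output neighbourhood is explicit.
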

	\begin{proof} We are showing only {\rm (iii)}. The proof of (iv) follows similarly from Proposition \ref{propStability}, see also \cite[Theorem 2.6]{CPR2019}.
Fix any $({t},\bx) \in \Omega$. Find positive $a, b$, and $\kappa$, such that for each $(x,y)\in \big(\ball_{X}[\bx,a]\times \ball_{Y}[0, b]\big)\cap\gph\,G_{{t}}$ there is $r>0$ such that for each $u\in \ball_X[x,r]$ we have
\begin{eqnarray*}
	\Vert u-x\Vert\leq \kappa \dist(y,G_{{t}}(u)). 
\end{eqnarray*}
Let $\mu:=1/(2\kappa)$ and $\kappa':= 3\kappa$. Then $\kappa \mu <1$ and $\kappa'>2\kappa=\kappa/(1-\kappa \mu)$. Find $\alpha\in \left(0,  b/(2\mu)\right)$ such that each $x, u \in \ball_X[\bx,2\alpha]$ and each $s \in\ball_P[{t},\alpha]$ we have
\begin{eqnarray*}
	\Vert f(s,u)-f({t},u)-(f(s,x)-f({t},x))\Vert \leq \mu \Vert x-u\Vert.
\end{eqnarray*}
Let $\beta:=b/4$. Then $2\beta +\mu \alpha<b/2+b/2=b.$ Since $f$ is continuous, there is $r'\in (0,\alpha/2]$ such that
\begin{eqnarray*}
	\Vert f(s,\bx)-f({t},\bx)\Vert\leq \beta\quad \text{for each}\quad s \in\ball_P[{t},r'].
\end{eqnarray*}
 Fix any $(s, x) \in\big(\ball_P[{t}, r']\times \ball_{X}[\bx, r']\big)\cap \Omega$.  Define a mapping $g: X\longrightarrow Y$ such that
\begin{eqnarray*}
	g(u):= f(s,u)-f({t},u)\quad\text{for}\quad u\in X.
\end{eqnarray*}
Then $G_{s}=g+G_{{t}}$ and for  each $x, u \in \ball_X[\bx,2\alpha]$ we have
	\begin{eqnarray*}
		\Vert g(x)-g(u)\Vert \leq \mu \Vert x-u\Vert\quad\text{and}\quad \Vert g(\bx)\Vert \leq \beta.
	\end{eqnarray*}
Theorem \ref{thmStabilitySubregularity}, with $G:=G_{{t}}$ and $\by:=0$, implies that for each $y\in \ball_Y[0,\beta]$ and each $x\in (g+G_{{t}})^{-1}(y)\cap \ball_X[\bx, \alpha]$  there is $r\in (0,\alpha]$ such that for each $u\in\ball_X[x,r]$ and  each   $ v\in  (g+G_{{t}})(u)\cap \ball_X[0, \beta]$   we have
\begin{eqnarray*}
	\Vert u-x\Vert\leq \kappa' \Vert y-v\Vert.
\end{eqnarray*}
We are showing that for each $(x,y) \in\big(\ball_{X}[\bx,\alpha/3]\times\ball_{Y}[0,\beta/3]\big)\cap \gph\,G_s$ there is $r>0$ such that for each $u\in \ball_{X}[x, r]$ we have
\begin{eqnarray*}
	\Vert u-x\Vert\leq \kappa'\dist(y,G_s(u)).
\end{eqnarray*}
Fix any such $(x, y)$ and find a corresponding $r\in (0,2\kappa'\beta/3]$ as in the claim and fix any $u\in \ball_{X}[x, r]$.  Thus $x\in (g+G_t)^{-1}(y)\cap \ball_{X}[\bx,\alpha]$. Fix any $v\in G_{s}(u)$. If $\Vert v\Vert\leq \beta$, using the claim, we get $\Vert u-x\Vert\leq \kappa' \Vert y-v\Vert$.

 If $\Vert v\Vert> \beta$, then $\Vert y-v\Vert\geq \Vert v\Vert-\Vert y\Vert> \beta-\beta/3=2/3\beta$  and so
 \begin{eqnarray*}
 	\Vert u - x \Vert\leq r\leq 2\kappa'\beta/3< \kappa'\Vert y-v\Vert.
 \end{eqnarray*}
 To sum up, we show that for each $({t},\bx)\in \Omega$ there are constants $\kappa'>0$, $\alpha>0,$ $\beta>0$, and $r'\in (0,\alpha/2)$ such that for each $(s, u) \in\big(\ball_P[{t}, r']\times \ball_{X}[\bx, r']\big)\cap \Omega$  and each $(x,y) \in\big(\ball_{X}[u,\alpha]\times\ball_{Y}[0,\beta]\big)\cap \gph\,G_{{s}}$ there is $r>0$ such that for each $v\in \ball_{X}[x, r]$ we have
 \begin{eqnarray*}
 \Vert v-x\Vert \leq \kappa'\dist(y,G_s(v)).
 \end{eqnarray*}
 Note that  $\ball_{X}[u,r']\subset\ball_{X}[\bx,\alpha]$, then $G_s$ is strongly metrically subregular around $(x,0)$  with the constant $\kappa'$ and \nns $\ball_{X}[x,\alpha]$ and $\ball_{Y}[0, \beta]$.
 
 So $\kappa'$, $\alpha,$ $\beta$, and $r'$ depends only on the choice $(\bar{t},\bx)\in \Omega$. Then from open covering $\cup_{z=(t,x)\in\Omega} \big(\ball_P(t,r_z')\times\ball_{X}(x, r_z'))$ of compact set $\Omega$ find a finite subcovering $\mathcal{O}_i:= \big(\ball_P(t_i,r_{i}')\times\ball_{X}(x_i, r_{i}'))$ for $i=1,2,3, \dots, N$.
 Let $a:=\min\lbrace \alpha_{i}: i=1,2,3, \dots, N\rbrace $, $b:=\min\lbrace \beta_i: i=1,2,3, \dots, N\rbrace $, and $\kappa:=\max\lbrace \kappa'_i: i=1,2,3, \dots, N\rbrace.$ For any $(t,x)\in \Omega$ there is an index $i\in \lbrace 1,2,3,\dots, N\rbrace$ such that $(t,x)\in\mathcal{O}_i$. Hence the mapping $G_t$ is strongly metrically subregular around $(x,0)$ with the constant $\kappa$ and \nns $\ball_{X}[x,a]$ and $\ball_{Y}[0, b]$.
\end{proof}

Note that if $f$ is continuously differentiable, then condition {\rm (ii)} is satisfied.

Similarly to the previous result, strong metric subregularity at each point of a compact set implies  strong metric subregularity on the entire set

\begin{theorem}
\label{thmParametricStabilitySubregularityAt}
Let  $\Omega \subset P\times X$ be a compact set. Consider a set-valued mapping $F:X\tto Y$ and a continuous single-valued mapping $f: P\times X\longrightarrow Y$ such that for each $({t},\bx)\in \Omega$ we have:
\begin{enumerate}
	\item[\rm (i)]  the mapping $X\ni x \longmapsto G_{{t}}(x):= f({t},x)+F(x)$  is strongly metrically subregular \textbf{at} $(\bx,0)$;
	\item[\rm (ii)]  for each $\mu>0$ there is $\alpha>0$ such that for each $x, u \in \ball_X[\bx,\alpha]$ and each $s \in\ball_P[{t},\alpha]$ we have
	\begin{eqnarray*}
		\Vert f(s,u)-f({t},u)-(f(s,x)-f({t},x))\Vert \leq \mu \Vert x-u\Vert.
	\end{eqnarray*}
\end{enumerate}
Then there are $\kappa>0$ and $c>0$ such that for each $(t,x)\in \Omega$ the mapping $G_t$ is strongly metrically subregular \textbf{at} $(x,0)$ with the constant $\kappa$ and the \nn $\ball_{X}[x,c]$.
\end{theorem}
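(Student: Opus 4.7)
The plan is to mirror the proof of Theorem \ref{thmParametricStabilitySubregularity}, but to use Proposition \ref{propStability} (perturbation of strong metric subregularity \emph{at} a point) in place of Theorem \ref{thmStabilitySubregularity} (perturbation of the ``around'' version); the compactness of $\Omega$ plays the same globalisation role as before.

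First, I would fix an arbitrary $({t},\bx) \in \Omega$ and invoke (i) to obtain $\kappa > 0$ and $\alpha_0 > 0$ such that $G_{{t}}$ is strongly subregular at $(\bx, 0)$ with constant $\kappa$ and \nn $\ball_X[\bx, \alpha_0]$. I would set $\mu := 1/(2\kappa)$ and $\kappa' := 3\kappa$, so that $\kappa\mu < 1$ and $\kappa/(1-\kappa\mu) = 2\kappa < \kappa'$. Using (ii), I would pick $\alpha \in (0, \alpha_0]$ so that $f(s, \cdot) - f({t}, \cdot)$ satisfies the uniform $\mu$-Lipschitz bound on $\ball_X[\bx, 2\alpha]$ whenever $s \in \ball_P[{t}, \alpha]$; by continuity of $f$ I would then shrink to some $r' \in (0, \alpha/2]$ so that $\Vert f(s, \bx) - f({t}, \bx)\Vert$ is controllably small for $s \in \ball_P[{t}, r']$.

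The local argument then runs as follows. For $(s, x) \in \Omega \cap (\ball_P[{t}, r'] \times \ball_X[\bx, r'])$, set $g(u) := f(s, u) - f({t}, u)$. I would apply Proposition \ref{propStability} with $F := G_{{t}}$ and the single-valued perturbation $g$: this yields that $G_s = g + G_{{t}}$ is strongly subregular \textbf{at} $(\bx, g(\bx))$ with constant $\kappa'$ and \nn $\ball_X[\bx, \alpha]$, uniformly in the choice of $(s, x)$ in the box above. The key step -- and the main obstacle -- is to translate this subregularity centred at $(\bx, g(\bx))$ into subregularity of the same $G_s$ at the shifted graph point $(x, 0)$, using that $(s, x) \in \Omega$ forces $0 \in G_s(x)$. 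Evaluating the estimate at $u = x$ yields $\Vert x - \bx\Vert \leq \kappa' \Vert g(\bx)\Vert$, so $x$ lies close to $\bx$; a triangle inequality then produces
\begin{eqnarray*}
\Vert u - x\Vert \leq \kappa' \dist(0, G_s(u)) + 2\kappa' \Vert g(\bx)\Vert
\end{eqnarray*}
on a suitable ball around $x$. The additive error $2\kappa'\Vert g(\bx)\Vert$ is small by the choice of $r'$ but does not vanish, so a purely multiplicative subregularity estimate is not obtained directly. This is where hypothesis (i) must be invoked at the \emph{moving} point $(s, x)$ itself (and not only at the fixed reference $({t}, \bx)$): (i) at $(s, x)$ provides short-range isolation of $x$ in $G_s^{-1}(0)$, and a two-regime analysis -- handling $\dist(0, G_s(u)) \geq 2\Vert g(\bx)\Vert$ with the triangle bound above and the complementary range with (i) at $(s, x)$ -- absorbs the additive remainder into the multiplicative constant, producing a uniform $\tilde \kappa$ depending only on $\kappa$ and a uniform radius $c$.

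Finally, the construction above yields for every $({t}, \bx) \in \Omega$ an open set $\ball_P({t}, r') \times \ball_X(\bx, r')$ on which the uniform subregularity holds; by the compactness of $\Omega$ I would extract a finite subcover $\{\mathcal O_i\}_{i=1}^N$ and set $\kappa := \max_i \tilde \kappa_i$ and $c := \min_i c_i$, which delivers the global uniform constants claimed in the theorem. The hard part, as emphasised above, is the passage from the output of Proposition \ref{propStability} (centred at $(\bx, g(\bx))$) to subregularity at the shifted graph point $(x, 0)$: this is precisely the step for which Theorem \ref{thmParametricStabilitySubregularity} relies crucially on the ``around'' hypothesis, and in the present ``at'' setting the compensation must come from combining the uniform estimate supplied by Proposition \ref{propStability} with hypothesis (i) evaluated at the moving point $(s, x)$.
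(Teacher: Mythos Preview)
Your overall plan---mirror the proof of Theorem~\ref{thmParametricStabilitySubregularity} with Proposition~\ref{propStability} in place of Theorem~\ref{thmStabilitySubregularity}, then globalise by compactness---is exactly what the paper states. You have also correctly isolated the one genuine difficulty: Proposition~\ref{propStability} yields strong subregularity of $G_s$ at $(\bx, g(\bx))$, whereas the conclusion requires it at the nearby graph point $(x,0)$; in the proof of Theorem~\ref{thmParametricStabilitySubregularity} this shift is absorbed by the ``around'' hypothesis, which is absent here.

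The problem is your proposed remedy. In the second regime you invoke hypothesis~(i) at the \emph{moving} point $(s,x)$, which supplies a constant $\kappa_{s,x}$ and a radius $\alpha_{s,x}$ that depend on $(s,x)$. The combined estimate then carries the constant $\max(2\kappa',\kappa_{s,x})$ and is valid only on $\ball_X[x,\min(c',\alpha_{s,x})]$; neither quantity is controlled in terms of the data attached to the fixed centre $({t},\bx)$. Consequently, the local box $\ball_P({t},r')\times\ball_X(\bx,r')$ does \emph{not} carry uniform constants, and the finite-subcover step at the end---which takes $\max_i \tilde\kappa_i$ and $\min_i c_i$ over the \emph{centres} only---cannot repair this: within each $\mathcal O_i$ the constants still vary with $(s,x)$. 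In short, the two-regime argument as written reintroduces precisely the non-uniformity the theorem is meant to eliminate. The paper's own proof is too terse to reveal how this shift is handled (it defers to \cite[Theorem~2.6]{CPR2019}); if you want a self-contained argument you must obtain the estimate at $(x,0)$ using only the constants $\kappa,\alpha_0$ attached to $({t},\bx)$, without appealing to (i) at $(s,x)$.
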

\begin{proof}
	The proof is similar to the proof of Theorem \ref{thmParametricStabilitySubregularity}, but instead of using Theorem \ref{thmStabilitySubregularity}, we apply Proposition \ref{propStability}.
\end{proof}

The following statement guarantees that uniform strong metric regularity is preserved along continuous solution trajectories for the widely studied parametric generalized equation, for $T>0$, given by
$$
p(t)\in f(t,x(t))+F(x(t))\quad \text{for each}\quad t\in [0,T],
$$
where $p:[0,T]\longrightarrow Y, x:[0,T]\longrightarrow X$,  $f:[0,T]\times X\longrightarrow Y$, and $F:X\tto Y$.

 This means that as one follows a continuous path within the domain, the property of strong metric subregularity remains consistent and uniform, providing stability in the behaviour of the system

	\begin{theorem}
		\label{thmStabilitySubreg}
		Let $T>0$ be given. Consider a set-valued mapping $F:X\tto Y$ and a continuous single-valued mapping $f:[0,T]\times X\longrightarrow Y$, and two continuous mappings $x:[0,T]\longrightarrow X$ and $p:[0,T]\longrightarrow Y$ such that 
		\begin{enumerate}
			\item[\rm (i)]  for each $t\in [0,T]$ the mapping $X\ni x\longmapsto G_t(x):=f(t,x)+F(x)$ is strongly metrically subregular around $(x(t),p(t))$;
			\item[\rm (ii)]  for each $t\in [0,T]$ and each $\mu>0$ there is $\delta>0$ such that for each $x,u\in \ball_{X}[x(t),\delta]$ and each $s\in (t-\delta, t+\delta)$ we have
			\begin{eqnarray*}
				\Vert f(s,u)-f(t,u)-(f(s,x)-f(t,x))\Vert \leq \mu \Vert x-u\Vert.
			\end{eqnarray*}
		\end{enumerate}
		Then: \begin{enumerate}
			\item[\rm (iii)] there are positive constants $a, b$, and $\kappa$ such that for each $t\in [0,T]$ the mapping $G_t$ is strongly metrically subregular around $(x(t),p(t))$ with the constant $\kappa$ and \nns $\ball_X[x(t),a]$ and $\ball_{Y}[p(t),b]$;
			\item[\rm (iv)] there are $c>0$ and $\kappa'>0$ such that for each $t\in [0,T]$ the mapping $G_t$ is strongly metrically subregular at $(x(t),p(t))$ with the constant $\kappa'$ and \nn $\ball_X[x(t), c]$.
			\end{enumerate}
	\end{theorem}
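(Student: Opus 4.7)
The plan is to reduce both parts (iii) and (iv) to Theorems \ref{thmParametricStabilitySubregularity} and \ref{thmParametricStabilitySubregularityAt} respectively, via two natural observations. First, taking $P := [0,T]$, the trajectory $\Omega := \{(t, x(t)) : t \in [0,T]\} \subset [0,T] \times X$ is compact as the continuous image of $[0,T]$ under the continuous map $t \mapsto (t, x(t))$. Second, the inhomogeneous right-hand side $p(t)$ can be absorbed into the single-valued term by setting $\tilde{f}(t, u) := f(t, u) - p(t)$, which is continuous since both $f$ and $p$ are.

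Next, I would verify that $\tilde{f}$ inherits the structural hypotheses required by the preceding theorems. The perturbation bound in (ii) transfers verbatim, because for every $s, t \in [0,T]$ and $x, u \in X$,
$$
\tilde{f}(s,u) - \tilde{f}(t,u) - (\tilde{f}(s,x) - \tilde{f}(t,x)) = f(s,u) - f(t,u) - (f(s,x) - f(t,x)),
$$
with the $p(s)$ and $p(t)$ contributions cancelling pairwise. Setting $\tilde{G}_t(x) := \tilde{f}(t,x) + F(x) = G_t(x) - p(t)$, hypothesis (i) becomes the statement that, for each $t \in [0,T]$, the mapping $\tilde{G}_t$ is strongly metrically subregular around $(x(t), 0)$ with some constants that a priori depend on $t$.

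Thus the compact set $\Omega$, together with $F$ and $\tilde{f}$, satisfies both hypotheses of Theorem \ref{thmParametricStabilitySubregularity}, which yields uniform constants $\kappa, a, b > 0$ such that for every $(t, x(t)) \in \Omega$ the mapping $\tilde{G}_t$ is strongly metrically subregular around $(x(t), 0)$ with constant $\kappa$ and \nns $\ball_X[x(t), a]$ and $\ball_Y[0, b]$. Translating the $Y$-neighborhood back by $p(t)$ yields (iii); applying Theorem \ref{thmParametricStabilitySubregularityAt} in place of Theorem \ref{thmParametricStabilitySubregularity} and the same reasoning yields (iv). There is no genuine obstacle here: the only nonroutine step is recognising the shift $\tilde{f} := f - p$ that moves the subregularity base point in $Y$ to the origin, after which compactness of the trajectory places us exactly in the setting of the preceding uniform theorems; continuity of $p$ is needed only so that $\tilde f$ remains continuous, and continuity of $x$ is needed only for compactness of $\Omega$.
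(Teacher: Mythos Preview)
Your proof is correct and follows essentially the same route as the paper: reduce to Theorems~\ref{thmParametricStabilitySubregularity} and~\ref{thmParametricStabilitySubregularityAt} by taking the compact trajectory as $\Omega$ and shifting the single-valued part so that the reference value in $Y$ becomes~$0$. The only cosmetic difference is that the paper enlarges the parameter space to $P:=[0,T]\times Y$ and uses $\hat f((s,y),x):=f(s,x)-y$, whereas you keep $P=[0,T]$ and set $\tilde f(t,x):=f(t,x)-p(t)$; both choices satisfy the hypotheses for the same reason (the shift terms cancel in the second-difference estimate).
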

	\begin{proof}
For {\rm (iii)}, apply Theorem \ref{thmParametricStabilitySubregularity}, and for {\rm (iv)}, apply Theorem \ref{thmParametricStabilitySubregularityAt}, both with $P := [0, T] \times Y$, the compact set $\Omega := \bigcup_{t \in [0, T]} (t, p(t), x(t))$, and the function $f(t, x) := f(p, x) - y$ for $t = (p, y) \in P$ and $x \in X$. 
	\end{proof}

\section{Conclusion}

In this paper, we have provided a comprehensive study of strong metric subregularity and its uniformity in Banach spaces. Our investigation has led to the following key contributions. We extended the local concept of strong metric subregularity to a uniform version over compact sets, thereby enabling the use of a common constant and \nn for all points in a given compact set.

These findings not only reinforce the theoretical underpinnings of variational analysis but also offer new avenues for the development of robust computational schemes in the context of generalized equations. Future work may explore further extensions of these uniformity results to broader classes of perturbations and more general frameworks, thereby enhancing both the theory and practical applications in optimization and control.

%\bibliographystyle{acm}
%\bibliography{C:/Users/Tom/Dropbox/References/references}
%\bibliography{C:/Users/Yseo/Dropbox/References/references}
%\addcontentsline{toc}{chapter}{Bibliography}

\end{document}